\documentclass{amsart}
\usepackage{graphicx} 
\usepackage[utf8]{inputenc}
\usepackage{amsfonts, amsmath, amssymb, amsthm}
\usepackage{dsfont}
\usepackage[margin=1in]{geometry}
\usepackage{comment}
\usepackage{graphicx}
\usepackage{pgffor, pgfplots, mathtools}
\usepackage{enumerate,lineno,setspace,float}
\usepackage{framed}
\usepackage{hyperref}
\usepackage{color}
\usepackage[numbers, sort&compress]{natbib}
\usepackage[capitalize]{cleveref}
\usepackage{doi}
\usepackage{soul}
\usepackage[linesnumbered,ruled]{algorithm2e}
\usepackage{array}
\usepackage{dsfont}
\usepackage{booktabs}

\usepackage{amsthm}
 \usepackage{tikz,ifthen}
 \usetikzlibrary{matrix}
 \usetikzlibrary{arrows.meta}

\newcolumntype{L}[1]{>{\raggedright\arraybackslash}p{#1}}

\hypersetup{
    colorlinks=true,       
    linkcolor={red!50!black},        
    citecolor={green!50!black},        
    urlcolor={blue!50!black}
}

\newcommand{\Href}[1]{\hyperref[#1]{\Cref{#1}}}
\renewcommand{\href}[1]{\hyperref[#1]{\ref{#1}}}
\renewcommand{\eqref}[1]{\hyperref[#1]{(\ref{#1})}}

\newtheorem{theorem}{Theorem}
\newtheorem{lemma}[theorem]{Lemma}

\newtheorem{definition}[theorem]{Definition}

\newtheorem{claim}[theorem]{Claim}

\newtheorem{obs}[theorem]{Observation}
\newtheorem{cor}{Corollary}
\newtheorem*{rem*}{Remark}
\numberwithin{theorem}{section}
\newtheorem*{rep@theorem}{\rep@title}
\newcommand{\newreptheorem}[2]{%
\newenvironment{rep#1}[1]{%
 \def\rep@title{#2 \ref{##1}}%
 \begin{rep@theorem}}%
 {\end{rep@theorem}}}
\makeatother

\newreptheorem{theorem}{Theorem}
\newreptheorem{lemma}{Lemma}
\newreptheorem{claim}{Claim}
\newreptheorem{proposition}{Proposition} 
\newreptheorem{obs}{Observation}

\def\cC{\ensuremath{\mathcal{C}}}

\def\cS{\ensuremath{\mathcal{S}}}

\let\epsilon=\varepsilon
\usepackage{color}
\definecolor{red}{rgb}{1,0,0}
\definecolor{Gold}{rgb}{1,0.843,0.2}
\definecolor{DarkBlue}{rgb}{0,0,0.6}

\pgfplotsset{compat=1.18}
\newcommand{\Xsum}{|X_1|+|X_2|+\ldots+|X_n|}
\newcommand{\wXsum}{|X_1|+2|X_2|+\ldots +n|X_n|}

\newcommand{\tXsum}{|X_2|+|X_3|+\ldots+|X_n|}

\title{An Upper Bound on the Hat Guessing Number of Graphs}

\author{Mason Shurman}
\author{Scott Albert Sibley}

\address{Department of Mathematics, University of California, 
    Irvine, CA, USA}
    
\thanks{The authors would like to thank Asaf Ferber for his comments and mentorship.}

\email{\{mshurman|sibleys\}@uci.edu}
\date{July 2026}

\begin{document}
\maketitle
\begin{abstract}
     The hat guessing number $HG(G)$ of a graph is defined by the following game: each player is placed on a vertex and assigned a hat with one of $k$ colors. Each vertex can see only the hat color of the other vertices it is connected to in $G$. All vertices guess, simultaneously, the color of their own hat. 
     The hat guessing number $HG(G)$ is the largest $k$ such that the players can guarantee that at least one of them guesses correctly. 
     In this paper, we show a general bound on the hat guessing number of a graph $G$ as a function of its order $n$ and its maximum degree $\Delta$. This is the first nontrivial upper bound on $HG(G)$ as a function of $\Delta$ and $n$ when $\Delta \geq \frac{n}{e}$. From this result we also obtain that the hat guessing number of the random graph $G_{n,1/2}$ is at most asymptotically $cn$ for $c\sim 0.809$, and that graphs with maximum degrees of $ (1-\varepsilon )n$ for fixed $\varepsilon>0$ cannot have $HG(G)=(1-o(1))n$.
     
\end{abstract}

\section{Introduction}

Consider the following game: Let $G$ be a graph with $n$ vertices, $v_1,\ldots,v_n$. Each vertex is associated with a player and assigned a color from a set $\{1,\ldots, k\}$ of $k$ possible colors.
Each player may only see the hat colors of their neighbors in $G$, excluding themselves, so if player $i$ is identified with $v_i$, they can see the hat color of player $j$ identified with $v_j$ if and only if $v_iv_j\in E(G)$. All the players agree beforehand on a guessing strategy, and must simultaneously guess the color of their own hat, without communication. The goal of the players is to guarantee that under any coloring, at least one player guesses correctly, in which case the players win. Let the hat guessing number $HG(G)$ be the largest $k$ such that the players have a winning strategy. 
The trivial upper bound for the hat guessing number is $n$, since each player guesses correctly with probability $1/k$, where $k$ is the number of colors, so 
if there were more than $n$ colors, the expected number of correct guesses would be less than 1.

The hat guessing number has been studied in many papers including \cite{alon2020hat,bosek2021hat,HeIdoPrzybocki2022,kokhas2019graphs,latyshev2024hat,butler2009hat,latyshev2022hats}.
One immediate question, asked by Alon, Ben-Eliezer, Shangguan, and Tamo in \cite{alon2020hat}, is whether or not the hat guessing number is bounded above by a function of the maximum degree. 
As they note, by folklore, 
a basic application of the Lovász Local Lemma yields that if $G$ has maximum degree $\Delta$, $HG(G) < e\Delta$. (Note that if a coloring is chosen randomly, the event that a particular vertex guesses right depends on at most $\Delta$ other vertices.) 
However, this argument does not completely resolve their question. In the regime where the maximum degree $\Delta$ is large, this bound is worse than the trivial bound of $n$. Until now, there have not been upper bounds on the hat guessing number as a function of the maximum degree in the regime $(\Delta>n/e)$. Our main result is the first such upper bound.

\begin{theorem}\label{thm:main} For any graph $G$ on $n$ vertices with maximum degree $\Delta$,
       $$ HG(G) \leq 1+\frac{\Delta}{2}+ \frac{\sqrt{(\Delta +2)^2 + 2(n^2-3n-n\Delta)}}{2}.$$
\end{theorem}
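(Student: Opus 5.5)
The plan is a second-moment (double-counting) argument over all $k^n$ colorings. Fix a winning strategy with $k$ colors, and for $j\ge 1$ let $X_j$ be the set of colorings in which exactly $j$ players guess correctly. Since the strategy wins, $X_0=\emptyset$, so
\[
|X_1|+|X_2|+\ldots+|X_n| = k^n .
\]
Counting pairs (coloring, correct player) gives the first moment
\[
|X_1|+2|X_2|+\ldots+n|X_n| = n k^{n-1},
\]
because for each fixed player $v_i$ and each coloring of the other vertices, exactly one color of $v_i$ makes $v_i$'s guess correct. The key integrality input is that $(j-1)(j-2)\ge 0$ for every integer $j\ge 1$. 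Hence
\[
\sum_j j(j-1)|X_j| \;\ge\; 2\sum_j (j-1)|X_j| \;=\; 2\bigl(nk^{n-1}-k^n\bigr).
\]
In words, if the average number of correct guesses $n/k$ exceeds $1$ while every coloring has at least one correct guess, there must be many colorings with two or more correct guesses. I would also keep $|X_1|$ explicit in case the sharper form $\sum_j j(j-1)|X_j|\ge 2(\tXsum)$ is needed.

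Next I would upper bound $\sum_j j(j-1)|X_j|$, the number of triples (coloring, ordered pair $(v_i,v_j)$ of distinct players both guessing correctly), pair by pair.

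\textbf{Nonadjacent pairs.} If $v_iv_j\notin E(G)$, then neither guess depends on the colors of $v_i$ or $v_j$. Fixing the colors of the other $n-2$ vertices therefore fixes both guesses, so exactly one of the $k^2$ colorings of $\{v_i,v_j\}$ makes both correct. The contribution is exactly $k^{n-2}$.

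\textbf{Adjacent pairs.} Here I use the trivial bound that $v_i$ must be correct, giving at most $k^{n-1}$. I would try to refine this: fixing the colors of the rest, $v_i$'s guess is a function $f$ of $c_j$ and $v_j$'s guess is a function $g$ of $c_i$. The number of pairs $(c_i,c_j)$ with $c_i=f(c_j)$ and $c_j=g(c_i)$ is the number of $c_j$ with $g(f(c_j))=c_j$, which can be as large as $k$ and no more. The refinement I would look for is to trade part of this against the $|X_1|$ term, or to subtract the diagonal contributions; the constants $-3n$ and $+2$ in the target suggest such a correction.

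There are at most $n\Delta$ adjacent ordered pairs and at least $n(n-1)-n\Delta$ nonadjacent ones. This gives
\[
2(nk^{n-1}-k^n) \;\le\; n\Delta k^{n-1} + \bigl(n^2-n-n\Delta\bigr)k^{n-2}.
\]
After dividing by $k^{n-2}$ this is a quadratic inequality in $k$, and solving it for $k$ yields a bound of the shape of \Cref{thm:main}.

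\textbf{Main obstacle.} I expect the difficulty to be the last step, namely making the constants match exactly. The statement is equivalent to
\[
2k^2-2(\Delta+2)k \le n^2-3n-n\Delta ,
\]
in which the $k$-linear coefficient involves $\Delta$ rather than $n$. That is a different weighting from the crude inequality above. So I would rebalance the argument by not dividing the integrality inequality uniformly, for instance by choosing which of $(j-1)(j-2)\ge 0$ or $j(j-1)\ge 0$ to apply, or by taking a suitable linear combination of the first- and second-moment identities. I would then bound adjacent pairs more carefully, as above, until the quadratic $2k^2-2(\Delta+2)k-(n^2-3n-n\Delta)\le 0$ emerges. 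Solving it by the quadratic formula gives exactly the stated bound.
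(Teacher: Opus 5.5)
There is a genuine gap, and it is not about constants. The inequality you derive is vacuous. After dividing by $k^{n-2}$ it reads $2k^2-n(2-\Delta)k+n(n-1-\Delta)\ge 0$. For $\Delta\ge 2$ this has only nonnegative coefficients, and for $\Delta\in\{0,1\}$ it has negative discriminant once $n\ge 3$. So it holds for every $k$ and bounds nothing.

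Reweighting the identities or counting adjacent pairs more carefully cannot repair this. Write $R$ for the number of correct guesses under a uniformly random coloring. A strategy may ignore what it sees, so any bound on the ordered-pair count $\sum_j j(j-1)|X_j|$ that does not itself use the winning hypothesis is at least $n(n-1)k^{n-2}$. That is the exact value for the edgeless graph. Yet with $x=n/k$ and $y=n(n-1)/k^2$ one checks $2(x-1)\le y\le n(x-1)$ for all $n/2\le k\le n-1$. So $(x,y)$ is the mean and second factorial moment of some distribution on $\{1,2,n\}$. Hence nothing built only from $\mathbb{E}[R]=n/k$, a bound on $\mathbb{E}[R(R-1)]$, and $1\le R\le n$ can exclude any such $k$. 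This fails even for the edgeless graph, whose hat guessing number is $1$ and where the theorem gives about $n/\sqrt 2$.

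The missing idea is a local switching argument relating $X_1$ to colorings at Hamming distance one, which no moment of $R$ captures. Take $c\in X_1$ with unique correct vertex $v$ and any $u\ne v$ with $uv\notin E(G)$, and recolor $u$ to the color it guesses in $c$. Neither $u$'s nor $v$'s guess changes, so the new coloring has at least two correct guesses. These colorings are distinct because $u$ was wrong in $c$, so each $c$ yields at least $n-1-\Delta$ of them. Conversely, a coloring with exactly $i$ correct guesses arises this way from at most $i(k-1)$ pairs $(c,u)$: choose which correct vertex was switched and its former color. Hence
\[
(n-1-\Delta)\,|X_1|\;\le\;(k-1)\sum_{i\ge 2} i\,|X_i| .
\]
Your two identities, together with $i\le 2(i-1)$ for $i\ge 2$, give $\sum_{i\ge 2} i|X_i|\le 2(n-k)k^{n-1}$ and $|X_1|\ge (2k-n)k^{n-1}$. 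Therefore $(2k-n)(n-1-\Delta)\le 2(k-1)(n-k)$, which is exactly your target $2k^2-2(\Delta+2)k\le n^2-3n-n\Delta$. This switching count is the core of the paper's proof; its auxiliary bipartite graph $H$ on colorings encodes exactly these moves.
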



One of the most interesting applications of our result is to the random graph $G_{n,1/2}$. 
Note that the maximum degree of $G_{n,1/2}$ is $(1+o(1))n/2\geq n/e$, so the Lovász Local Lemma argument does not yield a nontrivial result. In 2019, Bosek et al.\ in \cite{bosek2021hat} showed that with high probability, that is, with probability tending to 1 as $n$ tends to infinity, $$(2+o(1))\log_2(n)\leq HG(G_{n,1/2})\leq n-(1+o(1))\log_2 n.$$ In 2020, Alon and Chizewer in \cite{alon2022hat} showed that with high probability, $$ HG(G_{n,1/2}) \geq n^{1-o(1)} $$ by finding a subgraph with large hat guessing number. Using our main result, we get a better upper bound on $HG(G_{n,1/2})$. 

\begin{cor}\label{cor:Gn1,2} With high probability,
$$HG(G_{n,1/2})\leq (1+o(1)) \left(\frac{1+\sqrt5}{4}\right)n \sim 0.809n. $$
\end{cor}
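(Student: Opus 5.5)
The plan is to apply \Cref{thm:main} directly to $G_{n,1/2}$, once we know that its maximum degree is concentrated around $n/2$. The whole argument has two steps: a degree concentration estimate, and an asymptotic evaluation of the bound in \Cref{thm:main}.

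\emph{Step 1: the maximum degree.} For a fixed vertex $v$ of $G_{n,1/2}$, $\deg(v)\sim \mathrm{Bin}(n-1,1/2)$. By the Chernoff bound,
$$\Pr\Bigl[\deg(v)\ge \tfrac{n-1}{2}+\sqrt{n\log n}\Bigr]\le e^{-2\log n}=n^{-2}.$$
A union bound over the $n$ vertices then shows that, with probability at least $1-1/n$, the maximum degree satisfies $\Delta\le n/2+\sqrt{n\log n}$. On the other side, $\Delta\ge \frac{2|E|}{n}$, and $|E|$ is concentrated at $\binom n2/2$, so $\Delta\ge (1/2-o(1))n$ with high probability as well. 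Hence $\Delta=(1/2+o(1))n$ with high probability.

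\emph{Step 2: evaluating the bound.} Write $\Delta=\delta n$ with $\delta=1/2+o(1)$ and substitute into \Cref{thm:main}. The expression under the square root is
$$(\Delta+2)^2+2(n^2-3n-n\Delta)=n^2\bigl(\delta^2+2-2\delta\bigr)+O(n).$$
The map $\delta\mapsto \delta^2-2\delta+2=(1-\delta)^2+1$ is continuous and positive, and equals $5/4$ at $\delta=1/2$. So the square root is $(1+o(1))\frac{\sqrt5}{2}n$. The whole bound is therefore
$$1+\frac{\delta n}{2}+\frac{(1+o(1))\sqrt5\, n}{4}=(1+o(1))\frac{1+\sqrt5}{4}\,n.$$

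No monotonicity argument is needed, because we plug in the actual (random) value of $\Delta$. The conclusion follows on the high-probability event from Step 1.

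The only potential obstacle is checking that the error terms really are $o(n)$ after taking the square root. This holds because the leading coefficient $5/4$ is bounded away from $0$: the $O(n)$ and $O(n\sqrt{n\log n})$ corrections inside the root contribute only $O(\sqrt{n\log n})$ after taking the square root. Everything else is routine.
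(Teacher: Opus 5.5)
Your proposal is correct and follows essentially the same route as the paper: establish $\Delta=(1/2+o(1))n$ with high probability, substitute into Theorem~\ref{thm:main}, and observe that the quantity under the square root is $(\tfrac54+o(1))n^2$. The only difference is that you prove the degree concentration yourself (a Chernoff bound plus a union bound for the upper tail, and the average degree for the lower tail), whereas the paper cites Bollob\'as; the lower-tail bound is not actually needed, since the bound in Theorem~\ref{thm:main} is increasing in $\Delta$.
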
 
It may be initially unclear whether the inequality in Theorem \ref{thm:main} is always better than the trivial bound of $n$. With some algebraic manipulation, we see that not only is it always better than the trivial bound for non-complete graphs, but if $\Delta=(1-\varepsilon) n$, it is asymptotically better than the trivial bound as well.
\begin{cor}\label{cor:e-d}
For any graph $G$ on $n$ vertices with maximum degree $\Delta$
$$HG(G) \leq 1+\frac{3n}{4}+\frac{\Delta^2}{4n}.$$

In particular, for any $\varepsilon>0$ and any graph $G$ on $n$ vertices, for $n$ sufficiently large,  if $\Delta(G)\leq (1-\varepsilon )n,$ then $HG(G)\leq (1-\zeta)n$ for some $\zeta(\varepsilon)$ independent of $n$. 

\end{cor}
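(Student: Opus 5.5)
We derive Corollary~\ref{cor:e-d} directly from Theorem~\ref{thm:main}, using only algebra. It suffices to show
$$\frac{\Delta}{2}+\frac{\sqrt{(\Delta+2)^2+2(n^2-3n-n\Delta)}}{2}\le \frac{3n}{4}+\frac{\Delta^2}{4n}.$$
Equivalently, we need
$$\sqrt{(\Delta+2)^2+2n^2-6n-2n\Delta}\le \frac{3n}{2}-\Delta+\frac{\Delta^2}{2n}.$$
The right-hand side equals $\frac{(n-\Delta)^2}{2n}+n\ge 0$, so we may square both sides.

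\textbf{Squaring.} The right-hand side squared is
$$\Bigl(n+\tfrac{(n-\Delta)^2}{2n}\Bigr)^2 = n^2+(n-\Delta)^2+\frac{(n-\Delta)^4}{4n^2}.$$
The left-hand side squared is
$$\Delta^2+4\Delta+4+2n^2-6n-2n\Delta = n^2+(n-\Delta)^2+4\Delta+4-6n.$$
So the inequality reduces to
$$4\Delta+4-6n\le \frac{(n-\Delta)^4}{4n^2}.$$
Since $\Delta\le n-1$, the left side is at most $4(n-1)+4-6n=-2n<0$. The right side is nonnegative, so the inequality holds. This proves the first bound, which is in fact slightly lossy, as expected.

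\textbf{Second part.} Suppose $\Delta\le(1-\varepsilon)n$. Then
$$\frac{\Delta^2}{4n}\le \frac{(1-\varepsilon)^2n}{4},$$
and so
$$HG(G)\le 1+n\Bigl(\tfrac34+\tfrac{(1-\varepsilon)^2}{4}\Bigr)=1+n\Bigl(1-\tfrac{2\varepsilon-\varepsilon^2}{4}\Bigr).$$
Set $\zeta=\frac{2\varepsilon-\varepsilon^2}{8}$, which is positive for $0<\varepsilon<1$. Once $n\ge 1/\zeta$, the additive $1$ is at most $\zeta n$, which gives $HG(G)\le(1-\zeta)n$.

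If $\varepsilon\ge 1$, then $\Delta=0$ and the statement is trivial, since $HG(G)=1$.

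The only real check is that the right-hand side is nonnegative before squaring, and this follows from the completed-square form above.
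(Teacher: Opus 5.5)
Your proof is correct and is essentially the paper's argument: squaring once is equivalent to the paper's two steps, namely bounding the radicand by $n^2+(n-\Delta)^2$ using $\Delta\le n-1$ (your $4\Delta+4-6n<0$) and then applying $\sqrt{1+x^2}\le 1+x^2/2$ (your $(n-\Delta)^4/(4n^2)\ge 0$). Your $\varepsilon$-dependent threshold $n\ge 1/\zeta$ is more careful than the paper's fixed ``$n>8$'', which does not follow from its computation when $\varepsilon$ is small; one small correction is that for $\varepsilon>1$ the hypothesis is vacuous, so it does not force $\Delta=0$.
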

\subsection{Proof Outline}
In this subsection, we describe our plan of attack. Let $G$ be a graph where each vertex is given one of $k$ colors, and let $\cS$ be a winning guessing strategy for $G$. If a coloring $c$ is chosen at random, then by conditioning on the colors of its neighbors, it is easy to see that each vertex $v$ guesses their hat correctly with probability $1/k$. Letting $R_\cS(c)$ be the number of correct guesses on the coloring $c$ under the strategy $\cS$, by linearity of expectation, the expected number of vertices that guess correctly is:
\begin{obs}\label{obs:1.2Avg} 
$$\mathbb{E}[R_\cS(c)]=n/k$$
\end{obs}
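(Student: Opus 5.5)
We prove \Cref{obs:1.2Avg} by writing $R_\cS(c)$ as a sum of indicators and using linearity of expectation. Let the coloring $c$ be chosen uniformly at random from $[k]^{V(G)}$, so the colors $c(v)$ are independent and each is uniform on $\{1,\ldots,k\}$. Write
$$R_\cS(c)=\sum_{v\in V(G)} \mathds{1}_{A_v},$$
where $A_v$ is the event that vertex $v$ guesses its own color correctly under $\cS$. By linearity of expectation, $\mathbb{E}[R_\cS(c)]=\sum_v \Pr[A_v]$. It therefore suffices to show that $\Pr[A_v]=1/k$ for every $v$.

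Fix a vertex $v$. Under $\cS$, the guess of $v$ is a deterministic function $g_v$ of the restriction $c|_{N(v)}$ of the coloring to the neighborhood $N(v)$. Since $G$ has no loops, $v\notin N(v)$.

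We condition on the colors of all vertices other than $v$; conditioning on $c|_{N(v)}$ alone would also suffice. Fix any assignment $\phi$ of colors to $V(G)\setminus\{v\}$. The guess $g_v(\phi|_{N(v)})$ is then a fixed color $a\in[k]$. By independence, $c(v)$ is still uniform on $[k]$ given this conditioning, so
$$\Pr[A_v\mid c|_{V\setminus\{v\}}=\phi]=\Pr[c(v)=a]=1/k.$$
Averaging over $\phi$ with the law of total probability gives $\Pr[A_v]=1/k$. Summing over the $n$ vertices yields $\mathbb{E}[R_\cS(c)]=n/k$.

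There is no real obstacle here. The only point needing care is that $v$'s guess must not depend on $c(v)$; this holds because players see only their neighbors and $v\notin N(v)$. Deterministic strategies are all that is needed. Randomized guessing would give the same conclusion, since for a fixed $\phi$ the guess would be independent of $c(v)$.
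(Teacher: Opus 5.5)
Your proof is correct and is the paper's argument: write $R_\cS(c)$ as a sum of indicators, apply linearity of expectation, and use that each vertex guesses correctly with probability $1/k$, because its guess depends only on colors that are independent of its own. The paper conditions on the neighbors' colors rather than on all other vertices, but as you note, either choice works.
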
 
Therefore, if we bound $\mathbb{E}[R_\cS(c)]$ from below, we can obtain bounds on $k$.

Let $X_1$ be the set of all colorings $c \in \mathcal{C}$ where $R_\cS(c)=1$. That is, only one vertex in $G$ guesses their color correctly. Note that if $\cS$ is winning and $k$ is near $n$, nearly every coloring must be in $X_1$. Otherwise, the expectation $\mathbb{E}[R_\cS(c)]$ will exceed $n/k$. We will show that this is not the case: For any coloring $c$ in $X_1$, we will find many ``nearby'' colorings $c' \not \in X_1$ that only differ from $c$ in the color of one vertex. After showing that each $c'$ is not overcounted too many times, we will conclude that some constant proportion of the colorings are not in $X_1$, obtaining a lower bound on $\mathbb{E}[R_\cS(c)]$ and an upper bound on $k$. 


\section{Construction of the Auxiliary graph $H$}
Recall that $X_1$ is the set of all colorings of $G$ where exactly one vertex correctly guesses their hat color. To prove Theorem \ref{thm:main}, we use an auxiliary graph $H$ to understand which colorings not in $X_1$ are ``nearby" colorings in $X_1$. The vertex set of this graph is $\mathcal{C}$, the set of all possible $k$-colorings of $G$. It would be natural to decide the edges of $H$ by connecting colorings $c \in X_1$ and $c' \not \in X_1$ that differ in color on only one vertex of $G$. 

However, we want to ensure that the unique vertex $v$ that guessed correctly in $c$ also guesses correctly in $c'$, so we require that $c'$ does not change the color of any neighbor of $v$. We also require that the vertex that differs between $c$ and $c'$ guesses the new color correctly in $c'$. Formally, consider
the following auxiliary graph:

\begin{definition}
Let $G$ be a graph on $n$ vertices where each vertex is assigned one of $k$ colors, and let $\cS$ be a winning guessing strategy on $G$. Define the vertex set of the auxiliary graph $H$ as the set of all $k$-colorings of $G$, and call it $\mathcal{C}$. In $H$, colorings $c \in X_1$ and $c' \notin X_1$ are adjacent if for some $u \in V(G)$:
\begin{enumerate}[(P\arabic*)]
    \item If $v$ is the unique vertex that guessed correctly in $c$, $u \neq v$ and $uv \notin E(G)$. 
    \item For all $w \in V(G) \setminus \{u\}$, $c(w)=c'(w)$.
    \item $\cS$ guesses the color of $u$ correctly in $c'$. 
\end{enumerate}
\end{definition}

To properly analyze this auxiliary graph, we will need to more precisely describe the colorings not in $X_1$. For each $2 \leq i \leq n$, define:
$$X_i:= \left\{c \in \mathcal{C}:R_{\cS}(c)=i, \deg_H(c) \neq 0  \right\}$$ 
where $R_{\cS}(c)$ is the number of vertices that guess correctly when $G$ is colored by $c$. We exclude the colorings with degree $0$ in $H$ because they cannot be analyzed with our approach.
Let $Y$ be the set of these excluded colorings:
$$Y:=\mathcal{C} \setminus \bigcup_{i=1}^n X_i.$$
Note that the $X_i$ for ${1 \leq i \leq n}$, together with $Y$, partition the vertices of $H$, so: $$\sum_{i=1}^n |X_i|+|Y|=|\mathcal C|=k^n.$$
From these definitions we obtain the following equation:
 \begin{lemma}\label{2.2lem:trivialn/k} If $\frac{n}{k} < 2,$
        $$\frac{n}{k} \geq \frac{|X_1|+2|X_2|+\ldots +n|X_n|}{|X_1|+|X_2|+\ldots +|X_n|}$$
    \end{lemma}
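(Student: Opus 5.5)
We start from Observation \ref{obs:1.2Avg}, which in counting form says $\sum_{c\in\mathcal C} R_\cS(c) = nk^{n-1}$, so that $\frac{n}{k} = \frac{1}{k^n}\sum_{c} R_\cS(c)$. We split this sum along the partition $\mathcal C = X_1\cup\cdots\cup X_n\cup Y$. Every $c\in X_i$ has $R_\cS(c)=i$, so
$$\frac{n}{k}\,k^n = \sum_{i=1}^n i|X_i| + \sum_{c\in Y} R_\cS(c).$$
The aim is to show that the colorings in $Y$ can only push the right-hand side up relative to the weighted average. That is, each $c\in Y$ should contribute at least $2$, which exceeds $n/k$, and this should suffice.

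First we identify the colorings in $Y$. Since $\cS$ is winning, $R_\cS(c)\ge 1$ for every $c$. By definition every coloring with $R_\cS(c)=1$ lies in $X_1$, with no degree condition imposed. Hence every $c\in Y$ has $R_\cS(c)\ge 2$ (and $\deg_H(c)=0$). Therefore
$$\frac{n}{k}\,k^n \ge \sum_{i=1}^n i|X_i| + 2|Y|.$$
Writing $A=\sum_i i|X_i|$, $B=\sum_i |X_i|$ and $k^n = B+|Y|$, this becomes
$$\frac{n}{k}(B+|Y|) \ge A + 2|Y|, \qquad\text{i.e.}\qquad \frac{n}{k}B - A \ge \Bigl(2-\frac{n}{k}\Bigr)|Y|.$$

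The hypothesis $\frac nk<2$ now makes the right-hand side nonnegative, so $\frac nk B\ge A$. Dividing by $B$ gives the claim, provided $B>0$. This is the only delicate point. Suppose $B=0$. Then every coloring lies in $Y$ and has $R_\cS\ge 2$, so the average of $R_\cS$ is at least $2$, which contradicts $n/k<2$. Hence $B>0$ and the division is legitimate.

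I expect no real obstacle here. The one step needing care is observing that $X_1$ is defined without the degree restriction, which forces $R_\cS\ge 2$ on $Y$. The rest is a convex-combination argument: the overall average $n/k$ is a weighted mix of the average over $\bigcup X_i$ and an average over $Y$ that is at least $2>n/k$. So the average over $\bigcup X_i$ must be at most $n/k$.
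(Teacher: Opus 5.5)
Your proof is correct and follows essentially the same route as the paper: both split $\sum_c R_\cS(c)$ over the partition $X_1,\dots,X_n,Y$, use that every coloring in $Y$ has $R_\cS\ge 2>\frac nk$, and conclude by a weighted-average argument. Your rearrangement $\frac nk B-A\ge\bigl(2-\frac nk\bigr)|Y|$ and your explicit check that $B>0$ make rigorous a point the paper leaves implicit, since it never rules out the denominator being zero.
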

    \begin{proof}
       Recall Observation \ref{obs:1.2Avg}, which states that $ \mathbb E[R_\cS (c)]=\frac{n}{k} $. Let $y=\mathbb E_{c\in Y}[R_\mathcal S(c)]$, and notice that $y\geq 2$, since each coloring not in $X_1$ guesses correctly on at least 2 vertices.\\  Therefore, 
        \begin{align*}
             \frac{n}{k} &=\mathbb E[R_\cS (c)] \\
             &=\frac{1}{|\mathcal C|} \sum_{c\in \mathcal C}R_\mathcal S(c) \\
            &=\frac{ \wXsum +y\,|Y|}{|X_1|+|X_2|+\ldots +|X_n|+|Y|}  \\
             &\geq \frac{|X_1|+2|X_2|+\ldots+ n|X_n|}{|X_1|+|X_2|+\ldots+ |X_n|}.
        \end{align*}
        Here the last inequality holds because $y\geq 2$ and we assumed $\frac{n}{k}< 2$. Note that if $Y$ is empty, the third and fourth lines are equivalent since $|Y|=0$.
    \end{proof}

\section{Analysis of $H$}
In this section, we will lower bound $\frac{n}{k}$ by double counting $e(H)$ and using Lemma \ref{2.2lem:trivialn/k}. (See Lemma \ref{3.7lem:bound}) Note that $H$ is bipartite with parts $X_1, \cC \setminus X_1$, since we only allow edges between $c\in X_1$ and $c'\notin X_1$. Therefore:
\begin{obs}\label{3.1obs:degs} $$e(H)=\sum_{c\in X_1} \deg(c)=\sum_{c' \notin X_1} \deg(c').$$\end{obs}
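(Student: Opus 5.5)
This is the standard double-counting identity for a bipartite graph, so the plan is to sum over edges from each side. The starting point is the definition of $H$: an edge is only ever placed between a coloring $c\in X_1$ and a coloring $c'\notin X_1$. Hence no edge of $H$ has both endpoints in $X_1$, and no edge has both endpoints in $\cC\setminus X_1$. So $H$ is bipartite with parts $X_1$ and $\cC\setminus X_1$, which is exactly what the paragraph before the observation notes.

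Next I would count the edge set $E(H)$ in two ways. Each edge $\{c,c'\}$ has exactly one endpoint in $X_1$. Summing $\deg(c)$ over $c\in X_1$ therefore counts every edge exactly once, giving $e(H)=\sum_{c\in X_1}\deg(c)$. In the same way, each edge has exactly one endpoint in $\cC\setminus X_1$, so summing degrees over that side also counts each edge exactly once, giving $e(H)=\sum_{c'\notin X_1}\deg(c')$.

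Two small details should be checked. First, adjacency is symmetric and $H$ is simple, since an edge is just an unordered pair $\{c,c'\}$ for which a suitable $u$ exists. Even if several vertices $u$ witness the same pair, it is still a single edge, so no multiplicity arises. Second, the colorings in $Y$ have degree $0$ by definition, so including them in the right-hand sum adds nothing. This means the sum over $c'\notin X_1$ could equally be written as $\sum_{i\ge2}\sum_{c'\in X_i}\deg(c')$, which is the form presumably used later.

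There is no real obstacle here; the only point needing care is the observation that every edge crosses between the two parts, and that is immediate from the definition of $H$.
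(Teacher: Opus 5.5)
Your proposal is correct and is essentially the paper's argument: the paper notes that edges of $H$ only join $X_1$ to $\cC\setminus X_1$, so $H$ is bipartite and the degree sum over either side counts every edge exactly once. Your extra checks (no multiple edges, and colorings in $Y$ having degree $0$ and so contributing nothing) are harmless details that the paper leaves implicit and that the observation itself does not need.
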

Using this observation, we obtain the following bounds: 
\begin{lemma}\label{3.2lem} 
     $$(n-1-\Delta)|X_1| \leq e(H) \leq \sum_{i=2}^n i(k-1)|X_i|.$$
\end{lemma}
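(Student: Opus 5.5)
We prove the two inequalities separately, each via Observation \ref{3.1obs:degs}: the lower bound comes from summing degrees over $X_1$, and the upper bound from summing degrees over $\cC\setminus X_1$.

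\textbf{Lower bound.} Fix $c\in X_1$ and let $v$ be the unique vertex guessing correctly in $c$. There are at least $n-1-\Delta$ vertices $u\neq v$ with $uv\notin E(G)$. For each such $u$, define $c'$ from $c$ by changing only the color of $u$ to the color that $\cS$ assigns as $u$'s guess. This guess depends only on the colors of $u$'s neighbors, which $c$ and $c'$ share, so it is well defined.

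Then (P2) and (P3) hold by construction. In $c'$, the vertex $u$ now guesses correctly; in $c$ it guessed wrong, since $u\neq v$, so $c'\neq c$. The vertex $v$ still guesses correctly in $c'$: $u$ is not a neighbor of $v$, so $v$ sees the same colors, and $v$'s own color is unchanged. Hence $R_\cS(c')\geq 2$ and $c'\notin X_1$, so $cc'$ is an edge of $H$ satisfying (P1).

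Distinct choices of $u$ give distinct $c'$, since they differ from $c$ at different vertices. So $\deg_H(c)\geq n-1-\Delta$, and summing over $c\in X_1$ gives the lower bound.

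\textbf{Upper bound.} Every $c'\notin X_1$ with positive degree lies in some $X_i$ with $i\geq 2$, so it suffices to show $\deg_H(c')\leq i(k-1)$ for $c'\in X_i$. Any neighbor $c$ of $c'$ differs from $c'$ at exactly one vertex $u$, and by (P3) $u$ guesses correctly in $c'$. There are therefore at most $i$ choices of $u$. Given $u$, the coloring $c$ is determined by $c(u)$, which must differ from $c'(u)$, leaving at most $k-1$ choices. Thus $\deg_H(c')\leq i(k-1)$, and summing over $i\geq 2$ gives the upper bound.

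Both steps are routine. The one point needing care is checking that the constructed $c'$ really has $R_\cS(c')\geq 2$, i.e. that $v$'s guess is unaffected by the change at $u$; this is exactly what the non-adjacency condition in (P1) guarantees.
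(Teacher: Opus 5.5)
Your proof is correct and takes essentially the same approach as the paper: the lower bound recolors each non-neighbor $u\neq v$ to its guess (Claim~\ref{3.3clm}), and the upper bound notes that each neighbor of $c'\in X_i$ is fixed by one of the $i$ correct guessers together with one of $k-1$ alternative colors (Claim~\ref{3.4clm}). Your explicit checks that $c'\neq c$ and $R_\cS(c')\geq 2$, and your remark that colorings in $Y$ contribute nothing to $e(H)$, fill in details the paper leaves implicit.
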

\begin{proof}
Let $c \in X_1$, $c' \not \in X_1$. We will prove the lower and upper bounds by using Observation \ref{3.1obs:degs} and bounding $\deg (c)$ and $\deg(c')$ respectively.
\begin{claim}\label{3.3clm} For any $c\in X_1$, 
    $$\deg (c)\geq n-1-\Delta.$$
\end{claim}
\begin{proof}
    Let $c \in X_1$, and let $v \in V(G)$ be the vertex that guesses its color correctly in $c$. Let $u \in V(G)\setminus \{v\}$ be a vertex not adjacent to $v$, and let $a_u$ be the color that $u$ guesses under the coloring $c$. 
        Let $c_u \in \mathcal{C}$ be the coloring where:
        $$c_u(x):= \begin{cases} a_u \text{ if }x = u \\c(x) \text{ otherwise.}
        \end{cases}$$
        Then $c$ and $c_u$ are adjacent in $H$. For each $u \in V(G)$ not adjacent to $v$, the colorings $c_u$ are distinct, and there are at least $n-1-\Delta$ vertices not adjacent to $v$, not including $v$ itself. Thus, $\deg(c)\geq n-1-\Delta$. 
\end{proof}
\begin{claim}\label{3.4clm} For any $c'\in X_i$, $i\neq 1$,  
    $$\deg(c')\leq  i(k-1).$$
\end{claim}
\begin{proof}
    Let $c' \in X_i$, $i\neq 1$, and let $c \in X_1$ be adjacent to $c'$. Then there exist nonadjacent vertices $u$ and $v$ such that $v$ guesses correctly in $c$ (and $c'$), $u$ guesses correctly in $c'$, and $c'(w)=c(w)$ for all vertices $w \neq u$.
    In fact, an edge into $c'$ is determined by this unique vertex $u$ where the coloring changes, and that vertex must guess correctly in $c'$.

    For any fixed $c' \in X_i$, there are $i$ vertices that guess correctly in $c'$, so $i$ potential choices for $u$. For each of these vertices, there are $k-1$ ways to change its color. Since any $c$ adjacent to $c'$ must agree with $c'$ on every vertex except $u$, there are at most $i(k-1)$ colorings $c$ adjacent to $c'$. 
\end{proof}

Putting Claims \ref{3.3clm} and \ref{3.4clm} together with Observation \ref{3.1obs:degs}, we get 
$$e(H)=\sum_{c\in X_1}  \deg (c)\geq (n-1-\Delta)|X_1|,$$ and $$e(H)=\sum_{c' \notin X_1} \deg(c')\leq \sum _{c' \notin X_1, c' \in X_i} i(k-1)=\sum i(k-1)|X_i|.$$
This completes the proof of Lemma \ref{3.2lem}.
\end{proof}

Lemma \ref{3.2lem} allows us to compare $|X_1|$ and $2|X_2|+ 3|X_3|+ \ldots + n|X_n|$, but recall that to bound $\frac{n}{k}$ with Lemma \ref{2.2lem:trivialn/k}, we need to bound the fraction:
   $$ \frac{|X_1|+2|X_2|+ \ldots + n|X_n|}{|X_1|+|X_2| +\ldots + |X_n|}. $$ 

First, we bound the numerator, $|X_1|+2|X_2|+\ldots +n|X_n|$.

\begin{lemma}\label{3.5lem-num} 
$$\wXsum \geq\Xsum + \frac{1}{2}|X_1|\frac{n-1-\Delta}{k-1}$$
\end{lemma}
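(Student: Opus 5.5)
We have to show
$$\sum_{i=2}^n (i-1)|X_i| \geq \frac{1}{2}|X_1|\frac{n-1-\Delta}{k-1},$$
which is what remains after subtracting $\Xsum$ from both sides of Lemma \ref{3.5lem-num}. The plan is to get this directly from Lemma \ref{3.2lem}, which already gives
$$(n-1-\Delta)|X_1| \leq \sum_{i=2}^n i(k-1)|X_i|.$$

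First divide by $k-1$. This requires $k\geq 2$; if $k=1$ the statement is degenerate and can be handled separately. Dividing gives
$$\sum_{i=2}^n i|X_i| \geq |X_1|\frac{n-1-\Delta}{k-1}.$$

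Next use the elementary inequality $i-1 \geq \frac{i}{2}$, valid for every $i\geq 2$. It yields
$$\sum_{i=2}^n (i-1)|X_i| \geq \frac{1}{2}\sum_{i=2}^n i|X_i| \geq \frac{1}{2}|X_1|\frac{n-1-\Delta}{k-1}.$$

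Finally, add $\Xsum$ to both sides. The left side becomes $|X_1| + \sum_{i=2}^n i|X_i|$, which is exactly $\wXsum$. This gives the claimed inequality.

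There is essentially no obstacle here. The only delicate points are two bookkeeping checks. The factor $\frac{1}{2}$ comes precisely from the worst case $i=2$ in the comparison $i-1$ versus $i/2$. The other check is that the coefficient of $|X_1|$ matches on both sides: it is $1$ on each side, since $X_1$ contributes $1\cdot|X_1|$ to the weighted sum and does not appear in the edge count bound.

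If $n-1-\Delta<0$ could occur, the inequality would hold trivially, but since $\Delta\le n-1$ this case does not arise.
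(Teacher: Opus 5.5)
Your proposal is correct and follows essentially the same route as the paper: divide the inequality of Lemma~\ref{3.2lem} by $k-1$, then use $2(i-1)\geq i$ for $i\geq 2$ to compare $\sum_{i\ge 2}(i-1)|X_i|$ with half of $\sum_{i\ge 2} i|X_i|$. Your aside about $k=1$ is harmless. The statement itself has $k-1$ in a denominator, so the paper simply assumes $k\geq 2$ implicitly, as you do.
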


\begin{proof}
Dividing the inequality from Lemma \ref{3.2lem} by $k-1$, we obtain
\begin{equation}\label{eq:3.2dividedbyk}
     2|X_2|+3|X_3|+4|X_4|+\ldots +n|X_n| \geq \frac{|X_1|(n-1-\Delta) }{k-1}.
\end{equation}
Thus, 
\begin{align*}
    \wXsum &= (\Xsum) + |X_2|+2|X_3|+3|X_4|+\ldots +(n-1)|X_n| \\ 
    & = (\Xsum) +\frac{1}{2} \left(2|X_2|+4|X_3|+6|X_4|+\ldots +2(n-1)|X_n|\right) \\
    &\geq \Xsum + \frac{1}{2} \left( 2|X_2|+3|X_3|+4|X_4|+\;\ldots +n|X_n|\right)
    \\&\geq \Xsum +\frac{1}{2}|X_1|\frac{(n-1-\Delta)}{k-1}. &\text{by (\ref{eq:3.2dividedbyk})}
\end{align*}
\end{proof}

Now we bound the denominator $\Xsum$.
\begin{lemma}\label{3.6:lem-den} 
If $\frac n k < 2$, then
        $$\Xsum \leq \frac{|X_1|}{2-\frac{n}{k}}.$$
    \end{lemma}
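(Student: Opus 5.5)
The plan is to start from Lemma \ref{2.2lem:trivialn/k} and use the trivial fact that every coloring in $X_i$ with $i\ge 2$ contributes at least $2$ to the weighted sum. Write $S=\Xsum$ and $W=\wXsum$. Lemma \ref{2.2lem:trivialn/k} applies under our hypothesis $\frac{n}{k}<2$ and gives
$$\frac{n}{k}\, S \geq W.$$
Note that $S>0$ is needed for this ratio to make sense. If $S=0$ the claimed inequality is trivial, so I assume $S>0$.

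Next I bound $W$ from below term by term. Since $i\geq 2$ for every $i\geq 2$,
$$W \geq |X_1| + 2\left(\tXsum\right) = 2S - |X_1|.$$

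Combining the two displays gives $\frac{n}{k} S \geq 2S - |X_1|$, which rearranges to
$$\left(2-\frac{n}{k}\right) S \leq |X_1|.$$
The hypothesis $\frac{n}{k}<2$ makes the coefficient $2-\frac{n}{k}$ strictly positive. Dividing by it yields exactly the claimed bound.

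There is no real obstacle here. The only points needing care are that the hypothesis $\frac nk<2$ is used twice: once to invoke Lemma \ref{2.2lem:trivialn/k}, and once to divide by a positive quantity without flipping the inequality. The degenerate case $S=0$ is handled separately as noted above.
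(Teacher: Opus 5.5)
Your proof is correct and is essentially the paper's argument. The paper divides through by $\Xsum$, sets $a=|X_1|/(\Xsum)$ and $b=1-a$, and uses Lemma \ref{2.2lem:trivialn/k} to get $a+2b\le \frac nk$, hence $a\ge 2-\frac nk$; this is exactly your chain $\frac nk S\ge W\ge 2S-|X_1|$ written in normalized form. Your separate treatment of $S=0$, which the paper's normalization silently assumes away, is harmless, though under $\frac nk<2$ one in fact has $S>0$ automatically: otherwise every coloring lies in $Y$, and the average of $R_\cS$ would be at least $2$.
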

    \begin{proof}
        Let $$a=\frac{|X_1|}{\Xsum},b=\frac{\tXsum}{\Xsum}.$$
       These give us the linear system: 
       \begin{align*}
           &a+b=\frac{\Xsum}{\Xsum}=1 \\
           &a+2b=\frac{|X_1|+2(|X_2|+|X_3|+\ldots+ |X_n|)}{|X_1|+|X_2|+|X_3|+\ldots+ |X_n|}\leq \frac{|X_1|+2|X_2|+3|X_3|+\ldots+ n|X_n|}{|X_1|+|X_2|+|X_3|+\ldots+ |X_n|}\leq \frac{n}{k}.
       \end{align*}
        The last inequality is from Lemma $\ref{2.2lem:trivialn/k}$, which we can apply since $\frac n k< 2$.
        Solving the linear system, we obtain that 
        $$a\geq 2-\frac{n}{k}.$$
        By definition, $a(\Xsum)=|X_1|$, and since $a\geq 2-\frac{n}{k}$, we have $$|X_1|=a(\Xsum)\geq (2-\frac{n}{k})(\Xsum), $$ so $$\Xsum \leq \frac{|X_1|}{2-\frac{n}{k}}.$$
    \end{proof}

Putting these last two lemmas together, we obtain:
\begin{lemma}\label{3.7lem:bound} 
    $$\frac n k \geq \min \left(2,\, 1+\left(2-\frac{n}{k}\right)\left(\frac{1}{2}\frac{n-1-\Delta}{k-1}\right)\right).$$
\end{lemma}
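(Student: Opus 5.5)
We argue by cases on whether $\frac{n}{k}\geq 2$. If $\frac nk \geq 2$, the inequality holds trivially, because the right-hand side is a minimum that includes $2$. So we assume $\frac{n}{k}<2$ for the rest of the argument. This puts us in the regime where Lemma \ref{2.2lem:trivialn/k}, Lemma \ref{3.5lem-num} and Lemma \ref{3.6:lem-den} all apply, and we show that $\frac nk$ is at least the second term of the minimum.

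The first step is to start from Lemma \ref{2.2lem:trivialn/k}:
$$\frac nk \geq \frac{\wXsum}{\Xsum}.$$
Next we bound the numerator below by Lemma \ref{3.5lem-num}. Dividing by the denominator gives
$$\frac nk \geq 1+\frac{1}{2}\cdot\frac{n-1-\Delta}{k-1}\cdot\frac{|X_1|}{\Xsum}.$$
Finally we use Lemma \ref{3.6:lem-den}, which rearranges to $\frac{|X_1|}{\Xsum}\geq 2-\frac nk$. Substituting this yields exactly
$$\frac nk\geq 1+\left(2-\frac nk\right)\left(\frac12\frac{n-1-\Delta}{k-1}\right),$$
and this is at least the minimum in the statement.

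The only point needing care is the sign of the factor multiplying $\frac{|X_1|}{\Xsum}$ when we substitute the lower bound. Since $\Delta\leq n-1$ and $k\geq 2$, the quantity $\frac{n-1-\Delta}{2(k-1)}$ is nonnegative, so replacing $\frac{|X_1|}{\Xsum}$ by a smaller value preserves the inequality.

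We must also check that the denominator $\Xsum$ is nonzero. If $X_1$ is empty, then $\sum_{i\geq 2}|X_i|=0$ as well, since every $c'\in X_i$ has positive degree in $H$ and $H$ is bipartite with $X_1$ on one side. In that case every coloring lies in $Y$ and has at least $2$ correct guesses (since $\cS$ is winning), so $\frac nk\geq 2$ by Observation \ref{obs:1.2Avg}, which contradicts the case assumption. Hence $\Xsum>0$ and all the divisions above are legitimate.
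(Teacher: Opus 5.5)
Your proof is correct and is essentially the paper's argument. The paper applies Lemma~\ref{3.6:lem-den} first, replacing the denominator $\Xsum$ by $|X_1|/(2-\frac nk)$, and then Lemma~\ref{3.5lem-num}; you apply them in the opposite order, which makes no difference. Your checks that the coefficient $\frac{n-1-\Delta}{2(k-1)}$ is nonnegative and that $\Xsum>0$ (equivalently $X_1\neq\emptyset$) whenever $\frac nk<2$ are correct, and they make explicit points the paper leaves implicit.
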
 
\begin{proof}
If $\frac n k \geq 2,$ then we are trivially done. Thus, we may assume $\frac n k < 2$, allowing us the use of Lemma \ref{2.2lem:trivialn/k} and Lemma \ref{3.6:lem-den}.
    \begin{align*}
        \frac{n}{k} &\geq \frac{\wXsum}{\Xsum}  & \text{by \ref{2.2lem:trivialn/k}}
        \\ & = 1+\frac{|X_2|+2|X_3|+\ldots +(n-1)|X_n|}{\Xsum} 
        \\&\geq 1+ \left(2-\frac{n}{k}\right) \frac{|X_2|+2|X_3|+\ldots+ (n-1)|X_n|}{|X_1|} &\text{by \ref{3.6:lem-den}}
        \\& \geq 1+\left(2-\frac{n}{k}\right)\left(\frac{1}{2}\frac{n-1-\Delta}{k-1}\right). &\text{by \ref{3.5lem-num}}
    \end{align*}
\end{proof}
After lower bounding $\frac{n}{k}$, all that remains is to use basic algebraic manipulation to prove our main results.
\section{Proof of Main Results}
\begin{proof}[Proof of Theorem \ref{thm:main}]
    Let $G$ be a graph on $n$ vertices where each vertex is colored one of $k$ colors, and let $\cS$ be a winning strategy on $G$. 
    We will bound $k$ from above.
    
   We start by applying Lemma $\ref{3.7lem:bound}$. This gives $$\frac{n}{k}\geq \min \left(2, 1+\left(2-\frac{n}{k}\right)\left(\frac{1}{2}\frac{n-1-\Delta}{k-1}\right)\right).$$
   There are two possible cases:

   \medskip

   \noindent\textbf{If $2 \leq 1+\left(2-\frac{n}{k}\right)\left(\frac{1}{2}\frac{n-1-\Delta}{k-1}\right)$: }

    By Lemma \ref{3.7lem:bound}, $\frac n k \geq 2$. Observe that for all $n \geq 2$ and $0 \leq \Delta \leq n-1$, 
$$ k \leq \frac{n}{2} \leq  1+\frac{\Delta}{2}+ \frac{\sqrt{(\Delta +2)^2 + 2(n^2-3n-n\Delta)}}{2}.$$ 
    
   
   
    \noindent\textbf{If $ 2 \geq 1+\left(2-\frac{n}{k}\right)\left(\frac{1}{2}\frac{n-1-\Delta}{k-1}\right) $:} 
    
    In this case, Lemma \ref{3.7lem:bound} gives
    $$\frac{n}{k}\geq  1+\left(2-\frac{n}{k}\right)\left(\frac{1}{2}\frac{n-1-\Delta}{k-1}\right).$$
    Multiplying through by $2k(k-1)$, $$2n(k-1)\geq 2k(k-1)+(2k-n)\left(n-1-\Delta\right).$$ 

    Solving the quadratic for $k$, we get
    \begin{align*}
         k &\leq \frac{2\Delta+4+\sqrt{(2\Delta  +4)^2 +8(n^2-3n-n\Delta)}}{4}
         \\ &= 1+\frac{\Delta}{2}+ \frac{\sqrt{(\Delta +2)^2 + 2(n^2-3n-n\Delta)}}{2}.
    \end{align*}
    
\medskip

    In either case, it holds that $k \leq 1+\frac{\Delta}{2}+ \frac{\sqrt{(\Delta +2)^2 + 2(n^2-3n-n\Delta)}}{2}$. Because this equation holds for all winning strategies $\cS$, we conclude that $$HG(G) \leq 1+\frac{\Delta}{2}+ \frac{\sqrt{(\Delta +2)^2 + 2(n^2-3n-n\Delta)}}{2}.$$
\end{proof}

\begin{proof}[Proof of Corollary \ref{cor:Gn1,2}]
    It is well-known (see Bollobás, \cite{Bollobas2001}) that the maximum degree of the random graph is $(1+o(1))n/2$. Specifically, with high probability, $\Delta \leq \frac n 2 + (1+o(1)) \sqrt{\frac{n\log n}2}$. Let $\Delta = 
    \frac n 2+ \gamma$, where $\gamma=o(n)$. We apply Theorem \ref{thm:main} to obtain with high probability,
    \begin{align*}
        HG(G)&\leq 1+\frac{\Delta}{2}+ \frac{\sqrt{(\Delta +2)^2 + 2(n^2-3n-n\Delta)}}{2}
        \\&= 1+\frac \Delta 2 + \frac{\sqrt{\Delta ^2 +4\Delta +4+2n^2-2n\Delta -6n}}2
        \\&\leq 1+\frac n 4 + \frac \gamma 2+ \frac 1 2 \sqrt {\frac{n^2}{4}+n\gamma+\gamma^2+2n+4\gamma +4+2n^2-n^2-2n\gamma-6n}
        \\&\leq 1+\frac n 4 + o(n)+ \frac 1 2 \sqrt{\frac 5 4 n^2+o(n^2)}
        \\&\leq (1+o(1)) \frac{1+\sqrt 5}{4}n.
    \end{align*}
\end{proof}
\begin{proof}[Proof of Corollary \ref{cor:e-d}]
    Let $G$ be a graph on $n$ vertices. By \Cref{thm:main}, 
        \begin{align*}
        HG(G)\leq 1+\frac{\Delta}{2}+\frac{\sqrt{(\Delta+2)^2+2(n^2-3n-n\Delta)}}{2}.
        \end{align*}
        We note that $(\Delta+2)^2+2(n^2-3n-n\Delta)\leq n^2+(n-\Delta)^2$ since $\Delta\leq n-1$, so the expression becomes 
        $$HG(G)\leq 1+\frac \Delta 2+\frac{n}{2}\sqrt{1+\frac{(n-\Delta)^2}{n^2}}.$$ 
        Using the fact that $\sqrt{1+x^2}\leq 1+x^2/2$ for any $x \in \mathbb{R}$,
        \begin{align*} 
        HG(G)&\leq 1+\frac{\Delta}{2}+\frac{n}{2}\left(1+\frac{(n-\Delta)^2}{2n^2}\right)\\
        &=1+\frac{\Delta}{2}+\frac{n}{2}+\frac{n^2-2n\Delta+\Delta^2}{4n}=1+\frac{3n}{4}+\frac{\Delta^2}{4n}.
        \end{align*}
       To prove the last sentence of Corollary \ref{cor:e-d}, observe that if the maximum degree $\Delta$ of $G$ is less than $(1-\varepsilon)n$ for some $\varepsilon \in (0,1]$, $$HG(G)\leq 1+\frac{3n}{4}+\frac{(1-\varepsilon)^2n}{4}=1+\left(1-\frac \varepsilon 2+\frac{\varepsilon^2} 4\right)n.$$ 
       If we choose $\zeta(\varepsilon)= \varepsilon/8<\varepsilon/2-\varepsilon^2/4$, then for graphs $G$ on $n>8$ vertices, $HG(G) \leq (1-\zeta)n.$
\end{proof}

\bibliographystyle{plainnat}
\bibliography{refs}

\end{document}